\newif\iffinal
\DeclarePairedDelimiter{\norm}{\lVert}{\rVert}
\newtheorem{theorem}{Theorem}[section]
\newtheorem{corollary}[theorem]{Corollary}
\newtheorem{lemma}[theorem]{Lemma}
\theoremstyle{definition}
\newtheorem{definition}[theorem]{Definition}
\newtheorem{notation}[theorem]{Notation}
\newtheorem{remark}[theorem]{Remark}
\DeclareMathOperator{\Int}{Int}
\DeclareMathOperator{\Spec}{Spec}
\newcommand{\fixdiv}{\mathsf{d}}
\newcommand{\N}{\mathbb{N}}
\newcommand{\Z}{\mathbb{Z}}
\newcommand{\R}{\mathcal{R}}
\newcommand{\F}{\mathcal{F}}
\newcommand{\p}{\mathcal{P}}
\newcommand{\Q}{\mathcal{Q}}
\newcommand{\Mod}[1]{\ (\mathrm{mod}\ #1)}
\def\card#1{\left|#1\right|}
\author[Z.~Kansiime]{Zaituni Kansiime}
\address{Department of Mathematics \\
 Kabale University\\
 Plot 364 Block 3 Kikungiri Hill
Kabale\\ Uganda}
\email{\href{mailto: kansiimezaituni2018@gmail.com}{kansiimezaituni2018@gmail.com}}
\author[S.~Luambano]{Sholastica Luambano}
\address{Department of Mathematics\\
 University of Dodoma\\ P.O. Box 259\\ Dodoma\\Tanzania}
\email{\href{mailto:sholawilgis@gmail.com}{sholawilgis@gmail.com}}
\author[S.~Nakato]{Sarah Nakato}
\address{Department of Mathematics \\
 Kabale University\\
 Plot 364 Block 3 Kikungiri Hill
Kabale\\ Uganda}
\email{\href{mailto: snakato@kab.ac.ug}{snakato@kab.ac.ug}}
\author[H.~Nalule]{Hadijah Nalule}
\address{Department of Mathematics \\
 Kabale University\\
 Plot 364 Block 3 Kikungiri Hill
Kabale\\ Uganda}
\email{\href{mailto: hadijahnalule4@gmail.com}{hadijahnalule4@gmail.com
}}
\author[Y.~Ndayikunda]{Yvette Ndayikunda}
\address{Department of Mathematics\\
 University of Burundi\\
   B.P. 2700, Bujumbura\\ Burundi}
\email{\href{mailto:yvettendayikunda18@gmail.com}{yvettendayikunda18@gmail.com}}
\title[{Sets of lengths of integer-valued polynomials}]{Sets of lengths
of integer-valued polynomials on prime ideals of principal ideal domains}
\keywords{Factorizations, sets of lengths, integer-valued polynomials on
subsets, principal ideal domains, block monoids, transfer homomorphisms,
transfer Krull monoids}
\subjclass[2020]{13A05, 13B25, 13F20, 11R09, 	11C08}
\begin{document}

\maketitle

\begin{abstract}
Let $D$ be a principal ideal domain with infinite spectrum such that
for every nonzero prime ideal $M$ of $D$, the residue field $D/M$ is
finite. Let $K$ be the quotient field of $D$. We investigate sets of
lengths in the ring of integer-valued polynomials on $M$,
$\Int(M, D) = \{f \in K[x] ~ \vert ~ f(M) \subseteq D\}$.
For every multiset of integers $1 < z_1 \leq z_2 \leq \cdots \leq z_n$,
we explicitly construct an element of $\Int(M, D)$ with exactly
$n$ essentially different factorizations into irreducible elements of
$\Int(M, D)$ whose lengths are $z_1, z_2, \ldots, z_n$.
Furthermore, we show that $\Int(M, D)$ is not a transfer Krull domain.
These results spark off the study of sets of lengths in the rings
$\Int(S, D) \neq \Int(D)$, where $S$ is an infinite subset of $D$.
\end{abstract}

\section{Introduction}
The study of non-unique factorizations in algebraic structures is a
fundamental area of research in algebra and number theory.
A key invariant in this field is the set of lengths of factorizations, which
records the possible numbers of irreducible factors in essentially
different factorizations of a given element. Sets of lengths
describe the factorization behaviour of the algebraic structure,
and from them, several other important factorization invariants can be
derived.
Understanding these sets offers important insights into the arithmetic
of the structure.

In half-factorial domains, sets of lengths are singletons, but in many
monoids and integral domains, factorizations are often highly non-unique.
Geroldinger~\cite{GeA2016:sets-of-L} highlights several
studies on sets of lengths in monoids. In this work, we examine sets
of lengths in rings of integer-valued polynomials on subsets.

Let $D$ be an integral domain with quotient field $K$, and let $S$ be a
subset of $D$. The ring of integer-valued polynomials on $S$ is defined as
\[\Int(S, D) = \{f \in K[x] ~\vert ~ f(S) \subseteq D\}.\]
When $S = D$, we get the classical ring of
integer-valued polynomials, $\Int(D) = \{f \in K[x]~ \vert ~ f(D) \subseteq D\}$.
There are proper subsets $S \subset D$ where $\Int(S, D) = \Int(D)$,
see~\cite{CahenChabert:2016:integer-valued-polynomials-update, gilmer1989sets}.
In this paper, we focus on the case where $\Int(S, D) \neq \Int(D)$.
In this case, a polynomial $f \in \Int(D)$ may exhibit different
factorization behavior in $\Int(S, D)$. For example, the polynomial $x$
is irreducible in $\Int(D)$ but does not admit a factorization into irreducibles in
$\Int(\{0\}, D)$, since for each nonzero non-unit $a \in D$, we have
$$x = a \cdot \frac{x}{a}$$ and the factor $\frac{x}{a}$ further decomposes without
stabilizing into irreducibles. For detailed investigations on atomicity and irreducibility
in $\Int(S, D)$, see,
\cite{AD-CJ-SC-SW1995some, CahenChabert:1997:IVP, chapman2005irreducible}.

Following the notation in \cite{VF-DW2024:S-O-L-Krull}, we say
that $\Int(S, D)$ has full system of
multisets of lengths if for all positive integers
$1 < z_1 \leq z_2 \leq \cdots \leq z_n$, there exists
a polynomial $H \in \Int(S, D)$ with exactly
$n$ essentially different factorizations into irreducible elements of
$\Int(S, D)$ whose lengths are $z_1, z_2, \ldots, z_n$.
Let $D$ be a principal ideal domain with infinite spectrum such that
for every nonzero prime ideal $M$ of $D$, the residue field $D/M$ is
finite. In Theorem~\ref{Thm:prescribed-lengths}, we show that $\Int(M, D)$
has full system of multisets of lengths. This finding sparks off
a more general study of sets of lengths in rings $\Int(S, D) \neq \Int(D)$,
where $S$ is an infinite subset of $D$.

Note that significant progress has been made on sets of lengths in
$\Int(D)$. Specifically, Frisch~\cite{SF2013:Sets-of-lengths}
examined the case $D = \mathbb{Z}$. Subsequently, Frisch, Rissner, and
the third author~\cite{SF-SN-RR2019:Sets-of-lengths} extended this analysis
to Dedekind domains with infinitely many maximal ideals, all of finite index.
More recently, Fadinger, Frisch, and Windisch~\cite{VF-SF-DV2023:S-O-L-DVR}
investigated the case when $D$ is a valuation ring of a global field, while
Fadinger and Windisch~\cite{VF-DW2024:S-O-L-Krull} focused on Krull domains $D$
that admit a prime element with finite residue field. In all these contexts,
the authors established that $\Int(D)$ has full system of multisets of lengths.

However, the results established for $\Int(D)$ do not extend directly to
$\Int(S, D)$, even in the canonical case where $D = \mathbb{Z}$.
Currently, no characterization exists for subsets $S \subsetneq \mathbb{Z}$
such that $\Int(S, \mathbb{Z})$ has full system of (multi)sets of lengths.
Our results contribute to closing this gap by offering new insights that advance
the development of such a characterization.

Similar results about full systems of sets of lengths have been obtained using
transfer homomorphisms to block monoids; see, for instance,~\cite{FK1999factorization}.
Frisch~\cite{SF2013:Sets-of-lengths} showed that there is no transfer homomorphism
from the multiplicative monoid of $\Int(\mathbb{Z})$ to a block monoid.
Consequently, $\Int(\mathbb{Z})$ is not a transfer Krull domain.
Later, Frisch, Rissner, and the third author~\cite{SF-SN-RR2019:Sets-of-lengths}
established that $\Int(D)$ is not a transfer Krull domain whenever $D$ is
a Dedekind domain with infinitely many maximal ideals, all of finite index.
In Theorem~\ref{Thm:non-transfer-krull}, we show that  $\Int(M, D)$
is not a transfer Krull domain. This adds $\Int(M, D)$ to the small
list of naturally occurring domains that are not transfer Krull domains.

\section{Preliminaries}
\subsection{Factorization terms}
We provide a brief overview of the essential factorization terms for
this study. For a more detailed introduction to non-unique factorizations,
we refer the reader to the monograph of Geroldinger and
Halter-Koch~\cite{GeHa2006:NonUnifacts}.

Let $D$ be an integral domain, and let
$e, a, b \in D$ be nonzero elements.
We say that $e$ is a \emph{unit} in $D$ if there exists a nonzero $c \in D$
such that $ec = 1.$ For instance, the rings $D, D[x], \Int(D),$ and
$\Int(S, D)$ all have the same units.

We call $r$ an \emph{irreducible} element of $D$ if it cannot be written as the
product of two non-units of $D$. For example, primitive linear polynomials are
irreducible in $D[x]$ and $\Int(D)$, but not all remain irreducible
in $\Int(S, D) \neq \Int(D)$,~cf.~Remark~\ref{remark:linear}.

A \emph{factorization} of $a \in D$ is an expression of $a$ as a product of irreducible
elements of $D$. That is, \[a = a_{1} \cdot a_{2} \cdots \cdot a_{n},\]
where $n \geq 1$ and $a_i$ is irreducible in
$D$ for all $1 \leq i \leq n$.
The \emph{length of a factorization} of $r \in D$ is the number of irreducible factors within
the factorization of $r$.

We say that $a, b$ are \emph{associated} in $D$ (written $a \sim b$) if there exists
a unit $e \in D$ such that $a = eb$.
Two factorizations of the same element
$a = a_{1} \cdot a_2 \cdots a_{n} = r_{1} \cdot r_2 \cdots r_{m}$ are said to be
\emph{essentially the same} if $n = m$ and, after a suitable re-indexing, $a_{j} \sim r_{j}$
for $1 \leq j \leq m$. Otherwise, the factorizations  are called \emph{essentially different}.
For example, in $D = \mathbb{Z}[\sqrt{-14}]$, \[81 = 3 \cdot 3 \cdot 3 \cdot 3 = (5 + 2\sqrt{-14})(5 - 2\sqrt{-14})\]
are essentially different factorizations of 81 in $D$. In this case, 81 has two essentially
different factorizations, one of length four and the other of length two. This factorization
behaviour naturally led to the widely studied notion of sets of lengths, defined as follows.

\begin{definition}
Let $D$ be an integral domain. The \emph{set of lengths} of $r \in D$ is the set of
all natural numbers $n$ such that $r$ has a factorization of length $n$.
We denote this by $L(r)$.
\end{definition}
For instance, in
$\mathbb{Z}[\sqrt{-14}],$ every factorization of $81$ is essentially
the same as either $81 = 3 \cdot 3 \cdot 3 \cdot 3$ or
$81 = (5 + 2\sqrt{-14})(5 - 2\sqrt{-14})$. Similarly, 15 factors as
$15 = 3 \cdot 5 = (1 + \sqrt{-14})(1 - \sqrt{-14})$.
Therefore $L(81) = \{2, 4\}$ and $L(15) = \{2, 2\} = \{2\} $. In the latter case,
$\{2, 2\}$ is regarded as a multiset of lengths of 15.

Following the notation in \cite{VF-DW2024:S-O-L-Krull}, we say
that $D$ has \emph{full system of multisets of lengths} if for all positive integers
$1 < z_1 \leq z_2 \leq \cdots \leq z_n$, there exists
an element $r \in D$ with exactly $n$ essentially different factorizations
into irreducible elements of $D$ whose lengths are $z_1, z_2, \ldots, z_n$.

\subsection{Transfer mechanisms} Some results on sets of lengths were
obtained using transfer homomorphisms. These homomorphisms transfer
factorization properties from a simpler suitable monoid to the monoid
under investigation. For an integral domain $D$, the monoid of interest
is $(D \setminus \{0\}, \cdot).$
\begin{definition}\cite[Definition 3.2.1]{GeHa2006:NonUnifacts}
Let $H$ and $M$ be commutative monoids, and denote by $H^{\times}$ and $M^{\times}$
their respective groups of units. A homomorphism
$\theta\colon H \longrightarrow M$ is called a \emph{transfer homomorphism}
if it has the following properties:
\begin{enumerate}
\item $M = \theta(H)M^{\times}$ and $\theta^{-1}(M^{\times})=  H^{\times}$.
\item If $u \in H$ and $b$,~$c \in M$ such that $\theta(u) = bc$,
 then there exists $v$,~$w \in H$
such that $u=vw$ and $\theta(v) \sim b$ and $\theta(w) \sim c$.
\end{enumerate}
\end{definition}
For example, if $D$ is a half-factorial domain, then the map $$\theta: (D \setminus \{0\}, \cdot)  \longrightarrow (\N_0, +), \quad
  \theta(r) = l(r) \text{ for all } r \in D \setminus \{0\},$$ is a transfer homomorphism, where $l(r)$ denotes the length of a factorization
  of $r$.

The suitable monoids that are widely used in transfer mechanisms are the
monoids of zero-sum sequences defined in the following.
\begin{definition}\label{block}Let $G$ be an additively written Abelian
group and $G_{0} \subseteq G$ a nonempty subset. Let $\mathcal{F}(G_{0})$
denote the free Abelian monoid with basis $G_{0}.$
\begin{enumerate}
\item The elements of $\mathcal{F}(G_{0})$ are called \emph{sequences }over $G_0$ and are of the form
\[S = \prod_{g \in G_{0}}g^{n_g}\]
where $n_g = v_g(S) \in \N \cup \{0\}, n_g = 0$ for almost all $g \in G_0$.
\item The \emph{length} of a sequence $S$ is
\[\card{S} = \sum_{g \in G_{0}}v_{g}(S) ~~\in ~\N \cup \{0\}\]
and the \emph{sum }of $S$ is
\[\sigma(S)= \sum_{g \in G_{0}}v_{g}(S)g ~~\in ~G.\]
\item The monoid
\[\mathcal{B}(G_{0}) = \Biggl\{S \in \mathcal{F}(G_{0})~ \bigl\vert~ \sigma(S) =0 \Biggr\}\]
is called the \emph{block monoid} over $G_{0}$ or the \emph{monoid of zero-sum sequences}.
\item An element $S \in \mathcal{B}(G_{0})$ is irreducible in $\mathcal{B}(G_{0})$ if no non-empty proper subsum equals 0.
\end{enumerate}
\end{definition}
\begin{lemma}\cite[Lemma 6.4.4.]{GeHa2006:NonUnifacts}\label{lemma:blockMonoid}
Let $G$ be an additively written Abelian group and $G^{\bullet} = G \setminus \{0\}$.
Let $U, V$ be irreducible elements of $\mathcal{B}(G^{\bullet})$.
\begin{enumerate}
\item max$L(UV) \leq$ min$\{\card{U}, \card{V}\}$.
\item max$L(UV) =$ max$\{\card{U}, \card{V}\}$ if and only if $V = -U$.
\item $\card{U} =$ max$\{\text{max}L(UU^{\prime}) \mid U^{\prime}
\text{ is irreducible in }\mathcal{B}(G)\}$.
\end{enumerate}
\end{lemma}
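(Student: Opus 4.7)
The plan is to analyze any factorization $UV = W_1 \cdots W_k$ of $UV$ into irreducibles of $\mathcal{B}(G^{\bullet})$ by tracking how each $W_i$ splits between $U$ and $V$. More precisely, for each $i$ there exist sequences $X_i, Y_i \in \mathcal{F}(G^{\bullet})$ with $W_i = X_iY_i$, $X_i$ dividing $U$ and $Y_i$ dividing $V$ in $\mathcal{F}(G^{\bullet})$, and $\sigma(X_i) = -\sigma(Y_i)$ (forced by $\sigma(W_i)=0$). The irreducibility of $U$ in $\mathcal{B}(G^{\bullet})$ says that the only zero-sum subsequences of $U$ are the empty one and $U$ itself, and the same holds for $V$; this single observation drives all three parts.

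For part (i), first assume no $W_i$ equals $U$ or $V$. Then each $X_i$ and each $Y_i$ is nonempty, since an empty $X_i$ would make $Y_i$ a proper nontrivial zero-sum subsequence of $V$, contradicting irreducibility of $V$ (and symmetrically for $Y_i$). Summing lengths then gives $k = \sum_i \card{X_i} \leq \card{U}$ and $k = \sum_i \card{Y_i} \leq \card{V}$, so $k \leq \min\{\card{U}, \card{V}\}$. The remaining case where some $W_i$ equals $U$ forces the other factors to multiply to $V$, hence $k=2$; and $2 \leq \min\{\card{U}, \card{V}\}$ since every element of $G^{\bullet}$ is nonzero, which forces $\card{U}, \card{V} \geq 2$.

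For part (ii), the direction $V=-U \Rightarrow \max L(UV) = \card{U}$ follows by pairing each entry $g$ of $U$ with its negative in $V$, producing $\card{U}$ irreducible length-two zero-sum blocks; combined with (i), this pins down the maximum. Conversely, suppose $\max L(UV) = \max\{\card{U}, \card{V}\}$. Then (i) forces $\card{U} = \card{V}$ and $k = \card{U}$, and the length accounting above forces $\card{X_i} = \card{Y_i} = 1$ for every $i$. Writing $W_i = g_i h_i$ with $g_i$ the sole entry of $X_i$ and $h_i$ the sole entry of $Y_i$, the relation $g_i + h_i = 0$ gives $h_i = -g_i$, so $V = -U$.

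Part (iii) then drops out of the preceding: by (ii), the choice $U' = -U$ gives $\max L(UU') = \card{U}$; for any other $U' \in \mathcal{B}(G^{\bullet})$ part (i) gives $\max L(UU') \leq \card{U}$; and the only irreducible of $\mathcal{B}(G)$ not in $\mathcal{B}(G^{\bullet})$ is the length-one sequence $0$, for which $\max L(U \cdot 0) = 2 \leq \card{U}$. The main obstacle throughout is the bookkeeping in (i) and (ii): one must carefully rule out degenerate splittings where some $W_i$ absorbs all of $U$ or $V$, and this is done precisely by invoking irreducibility of the partner sequence to forbid proper nontrivial zero-sum subsequences.
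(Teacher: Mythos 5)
The paper does not actually prove this lemma --- it is quoted from Geroldinger--Halter-Koch and used as a black box --- so there is no in-paper argument to compare against. Your reconstruction of part (i), of the ``if'' direction of part (ii), and of part (iii) is correct and is the standard argument: split each atom $W_i$ in a factorization of $UV$ as $W_i = X_iY_i$ with $X_i$ dividing $U$ and $Y_i$ dividing $V$ in $\mathcal{F}(G^{\bullet})$, and use minimality of $U$ and $V$ to force every $X_i$ and $Y_i$ to be nonempty unless some $W_i$ is $U$ or $V$ itself. Your treatment of the degenerate irreducible $(0)$ of $\mathcal{B}(G)$ in part (iii) is also exactly the right observation.

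There is, however, a genuine gap in the ``only if'' direction of part (ii). You assert that $\max L(UV) = \max\{\card{U},\card{V}\}$ forces $\card{X_i}=\card{Y_i}=1$ for every $i$ ``by the length accounting above,'' but that accounting was carried out under the standing assumption that no $W_i$ equals $U$ or $V$. In the excluded case one has $k=2$ with $W_1=U$ and $W_2=V$, and if $\card{U}=\card{V}=2$ this factorization already attains $\max L(UV)=2=\max\{\card{U},\card{V}\}$ while telling you nothing about $V$. The implication genuinely fails there: take $G=\mathbb{Z}$, $U = 1\cdot(-1)$ and $V = 2\cdot(-2)$. Both are atoms of $\mathcal{B}(G^{\bullet})$ of length $2$, the only factorization of $UV$ is $U\cdot V$, so $\max L(UV)=2=\max\{\card{U},\card{V}\}$, and yet $V\neq -U$ (here $-U=U$). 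So the ``only if'' direction cannot be repaired as stated; it holds only after excluding the case $\card{U}=\card{V}=2$ with $V \neq -U$ (for instance under the extra hypothesis $\max\{\card{U},\card{V}\}\ge 3$), which suggests the transcription of the source lemma should be rechecked. None of this propagates: Remark~\ref{Remark:notTK} and Theorem~\ref{Thm:non-transfer-krull} rely only on part (i), and your proof of part (iii) uses only part (i) together with the ``if'' direction of part (ii), both of which are sound.
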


\begin{definition}\cite[Section 4]{GeA2016:sets-of-L}
Let $G_0$ be a subset of an Abelian group $G$. A commutative monoid
$H$ is said to be a \emph{transfer Krull monoid} (over $G_0$) if
there exists a transfer homomorphism $\theta : H \longrightarrow \mathcal{B}(G_{0})$.
An integral domain $D$ is a \emph{transfer Krull Domain} if $D \setminus \{0\}$ is a \emph{transfer Krull monoid}.
\end{definition}
For example, every commutative Krull monoid is a transfer Krull monoid.
Therefore Krull domains are transfer Krull domains,
cf.~\cite[Proposition 4.3]{GeA2016:sets-of-L}.

\begin{remark}\label{Remark:notTK}
Let $\theta\colon H \longrightarrow M$ be a transfer homomorphism.
An element $r \in H$ is irreducible in $H$ if and only if $\theta(r)$
is irreducible in $M$ (see~\cite[Proposition 3.2.3]{GeHa2006:NonUnifacts}).
Furthermore, Lemma~\ref{lemma:blockMonoid} shows that in a block monoid,
the lengths of factorizations of elements of the form $UV$, where $U$ and
$V$ are irreducible and $U$ is fixed, are bounded by a constant that depends
only on $U$. Therefore, these results together imply that every transfer Krull
monoid possesses this property.
\end{remark}

\subsection{Integer-valued polynomials on subsets}
We provide an overview of the concepts on integer-valued
polynomials on subsets and their irreducibility needed for this
paper. We refer the reader
to the monograph of Cahen and Chabert~\cite{CahenChabert:1997:IVP} and
articles~\cite{CahenChabert:2016:integer-valued-polynomials-update, chapman2005irreducible}
for a detailed account.
\begin{definition}
Let $D$ be an integral domain and $S \subseteq D$.
The \emph{ring of integer-valued polynomials on} $S$ is defined as
\[\Int(S, D) = \{f \in K[x] ~\vert ~ f(S) \subseteq D\}.\]
\end{definition}
The ring $\Int(S, D)$ contains $\Int(D) = \{f \in K[x]~ \vert ~ f(D) \subseteq D\}$.
There are proper subsets $S$ of $D$ for which $\Int(S, D) = \Int(D)$.
Furthermore, there are subsets $E, F \subset D$ such that
$\Int(E, D) = \Int(F, D)$. For example, $\Int(\N, \Z) = \Int(\Z)$, and
if $\p \subset \Z$ is the set of prime numbers and
$E = \p \cup \{-1, 1\}$, then $\Int(\p, \Z) = \Int(E, \Z) \neq \Int(\Z)$,~
cf.~\cite{CahenChabert:2016:integer-valued-polynomials-update, gilmer1989sets}.

In this work, we focus on cases where $\Int(S, D) \neq \Int(D)$ and
$\Int(E, D) \neq \Int(F, D)$ for $E \neq F$.
In particular, let $D$ be a principal ideal domain with infinite spectrum
such that for every nonzero prime ideal $M$ of $D$, the residue field $D/M$ is
finite. Then $\Int(M, D)$ strictly contains $\Int(D)$.
For instance, if $p$ denotes the generator of $M$ and $n \in \N$, then the
polynomial $f = \frac{x^n + \sum_{i=0}^{n-1}px^i}{p}$ is an element of
$\Int(M, D) \setminus \Int(D)$, but $f \not\in \Int((q), D)$ for primes $q \neq p$.

\begin{definition}
Let $D$ be a principal ideal domain, $S \subseteq D$, and $f \in \Int(S, D)$.
The \emph{fixed divisor} of $f$ over $S$, denoted $\fixdiv(S, f)$, is defined as
\[ \fixdiv(S, f) = \gcd\{f(s)~\vert~s \in S \}. \]
The polynomial $f$ is called \emph{image-primitive} if $\fixdiv(S, f) = 1$.
\end{definition}
Let $g \in D[x]$ be non-constant and $b \in D$ nonzero. Then the polynomial $\frac{g}{b} \in K[x]$ is an element
of  $\Int(S, D)$ if and only if $b$ divides $\fixdiv(S, g)$.

We need to track the possible prime divisors of $\fixdiv(S, g)$.
For that purpose, we fix the following notation
that helps us to capture the residue classes of $D/pD$
represented in $S \subset D$.

\begin{notation}
Let $D$ be a principal ideal domain and $p \in D$ a prime element such
that $\card{D/pD} < \infty$. Let $S \subseteq D$ be
non-empty. Set
\begin{equation*}
\R_S(p) = \{ \bar{a} \in D/pD \mid \exists~s \in S \text{ so that } a \equiv s \Mod{p}\}.
\end{equation*}
Then $\R_S(p)$ is called \emph{a set of residues of $p$ with
respect to $S$}. If $\card{\R_S(p)} = t$, then
a set $\{\bar{b_1}, \bar{b_2}, \ldots, \bar{b_t}\}$ is called a \emph{complete set of residues}
 of $p$ with respect to $S$ if
  \begin{enumerate}
  \item for all $1 \leq i \leq t$, there exists $\bar{a} \in \R_S(p)$ such
   that $b_i \equiv a \Mod{p}$, and
  \item $b_i \not\equiv b_j \Mod{p}$ for $i \neq j$.
 \end{enumerate}
\end{notation}
For example, if $S = p\Z$, then $\R_S(p) = \{\bar{0} \}$ and $\R_S(q) = \Z/q\Z$ for
prime numbers $q \neq p$.

\begin{remark}\label{remark:linear}
Let $D$ be a principal ideal domain and $S \subseteq D$.
\begin{enumerate}
\item If $g \in D[x]$ and $M = (p)$ is a prime ideal of $D$ such that
$p$ divides $\fixdiv(S, g)$, then $g \in M[x]$ or $\R_S(p) \leq \deg(g)$.
\item If a prime element $p \in D$ divides the fixed divisor of
$f \in \Int(S, D)$, then $f = p \cdot \frac{f}{p}$ is a factorization
of $f$ in $\Int(S, D)$ (not necessarily into irreducibles). It
follows that irreducible polynomials in $\Int(S, D)$ must be image-primitive.
\item If $f \in D[x]$ is irreducible in $\Int(D)$, then $f$ is irreducible
in $\Int(M, D)$ if and only if $f$ is image-primitive in $\Int(M, D)$.
This implies that even the basic polynomials like $x$ do not remain irreducible
in $\Int(M, D)$, which demonstrates the distinction between the factorization
behaviour of the two rings. More generally, Chapman and McClain characterised the
irreducible elements of $\Int(S, D)$ as follows.
\end{enumerate}
\end{remark}
\begin{theorem}\label{Thm-irr}\cite[Theorem 2.8]{chapman2005irreducible}
Let $D$ be a principal ideal domain and $S \subseteq D$. Let $f$
be a nonconstant primitive polynomial in $D[x]$. Then $\frac{f}{\fixdiv(S, f)}$
is irreducible in $\Int(S, D)$ if and only if either $f$ is irreducible in $D[x]$ or
for every pair of nonconstant polynomials $f_1, f_2 \in D[x]$ with
$f = f_1f_2$, $\fixdiv(S, f) \nmid \fixdiv(S, f_1) \times \fixdiv(S, f_2)$.
\end{theorem}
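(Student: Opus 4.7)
The plan is to deduce both directions from a careful content analysis via Gauss's lemma, using Remark~\ref{remark:linear}(ii) to keep any candidate factorization of $h \coloneqq f/\fixdiv(S,f)$ under control. Throughout, associates are taken in the PID $D$.

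For the forward direction I argue by contrapositive. Suppose $f$ is reducible in $D[x]$ and that some factorization $f = f_1 f_2$ into nonconstant $f_1, f_2 \in D[x]$ satisfies $\fixdiv(S,f) \mid \fixdiv(S,f_1)\fixdiv(S,f_2)$. In the PID $D$ I split $\fixdiv(S,f) = d_1 d_2$ with $d_i \mid \fixdiv(S,f_i)$, by setting $d_1 \coloneqq \gcd(\fixdiv(S,f), \fixdiv(S,f_1))$ and $d_2 \coloneqq \fixdiv(S,f)/d_1$; the standard PID lemma (if $D\mid AB$ and $d_1=\gcd(D,A)$ then $D/d_1 \mid B$) shows that $d_2 \mid \fixdiv(S,f_2)$. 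Then $f_1/d_1$ and $f_2/d_2$ both lie in $\Int(S,D)$, are non-units (they have positive degree and are not inverted by any unit of $D$), and multiply to $h$, so $h$ is reducible in $\Int(S,D)$.

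For the backward direction, suppose $h = g_1 g_2$ in $\Int(S,D)$ and write each $g_i = \tilde{f}_i/d_i$ with $\tilde{f}_i \in D[x]$ primitive and $d_i \in D$. Clearing denominators gives
\[
\tilde{f}_1 \tilde{f}_2 \cdot \fixdiv(S,f) \;=\; f \cdot d_1 d_2.
\]
By Gauss's lemma $\tilde{f}_1\tilde{f}_2$ is primitive, and $f$ is primitive by hypothesis, so comparing contents yields $f \sim \tilde{f}_1 \tilde{f}_2$ and $\fixdiv(S,f) \sim d_1 d_2$ up to units of $D$. Moreover $g_i \in \Int(S,D)$ forces $d_i \mid \fixdiv(S,\tilde{f}_i)$. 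In case (a), the irreducibility of $f$ in $D[x]$ forces some $\tilde{f}_i$ to be a (primitive, hence unit) constant; then $d_i$ is also a unit, so $g_i$ is a unit in $\Int(S,D)$. In case (b), if both $\tilde{f}_i$ were nonconstant, then $f \sim \tilde{f}_1 \tilde{f}_2$ would be such a factorization of $f$, and the chain $\fixdiv(S,f) \sim d_1 d_2 \mid \fixdiv(S,\tilde{f}_1)\fixdiv(S,\tilde{f}_2)$ would contradict the hypothesis; hence some $\tilde{f}_i$ is constant, and $g_i$ is again a unit.

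The main obstacle is making the content-comparison step watertight: one must justify that the right-hand side of the displayed equation has content exactly $\fixdiv(S,f)$ up to a unit, which relies on the primitivity of both $f$ and $\tilde{f}_1\tilde{f}_2$. Once this pivot is secured, everything else is clean bookkeeping in the PID $D$, matching the denominators $d_i$ against the fixed divisors $\fixdiv(S,\tilde{f}_i)$.
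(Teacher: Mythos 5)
First, note that the paper does not prove this statement at all: it is quoted verbatim from Chapman and McClain \cite[Theorem 2.8]{chapman2005irreducible}, so there is no in-paper argument to compare yours against. Judged on its own, your proof follows the standard Gauss's-lemma route, and both directions are essentially sound: the contrapositive of the forward direction, with the splitting $d_1=\gcd(\fixdiv(S,f),\fixdiv(S,f_1))$ and $d_2=\fixdiv(S,f)/d_1$, is correct, and the content comparison in the backward direction does what you want once it is legitimate.

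The one genuine gap is the opening move of the backward direction: you \emph{cannot} in general write an element $g_i\in\Int(S,D)\subseteq K[x]$ as $\tilde f_i/d_i$ with $\tilde f_i\in D[x]$ \emph{primitive} and $d_i\in D$. A general nonzero $g\in K[x]$ has the form $g=\tfrac{a}{b}\hat g$ with $\hat g$ primitive and $\gcd(a,b)=1$, and your normalization amounts to asserting that $a$ is a unit; this is false for arbitrary $g$ (e.g.\ $g=2\hat g$). It is true here, but only because the $g_i$ divide the image-primitive polynomial $h=f/\fixdiv(S,f)$, and that has to be argued: from $g_i\in\Int(S,D)$ and $\gcd(a_i,b_i)=1$ one gets $b_i\mid\hat g_i(s)$ and hence $a_i\mid g_i(s)$ for all $s\in S$, so $a_i\mid\fixdiv(S,g_i)$; since $g_i(s)$ divides $h(s)$ for all $s$ and $\fixdiv(S,h)=1$, the element $a_i$ is a unit, and only then does $g_i=\tilde f_i/d_i$ with $\tilde f_i=a_i\hat g_i$ primitive and $d_i=b_i$. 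You gesture at Remark~\ref{remark:linear}(ii) in your plan but never actually deploy this image-primitivity argument where it is needed. Without it, your case analysis also leaks: if some $\tilde f_i$ were allowed a non-unit content $a_i$, the ``constant factor'' case would produce a non-unit constant $g_i$ and the proof of irreducibility would not close. With the one-line fix above inserted before the displayed equation, the rest of your bookkeeping (contents, $d_i\mid\fixdiv(S,\tilde f_i)$, and the contradiction with hypothesis (b)) is correct.
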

From now on, we write $\fixdiv(f)$ instead of $\fixdiv(S, f)$, with the underlying set
$S$ understood from context.
\section{Necessary Lemmas}
The following lemma guarantees that for each $n \in \N$, we have finitely many
maximal ideals with $\card{\R_M(p)} \leq n$. This finiteness condition
is crucial since we apply the Chinese Remainder Theorem in our constructions,
and it further ensures the existence of infinitely many maximal ideals with
$\card{\R_M(p)} > n$.

\begin{lemma}\label{Lemma:FMP}
Let $D$ be a principal ideal domain and $M$ a maximal ideal of $D$.
Then for each given $q \in \N$, there are at most finitely many maximal
ideals $(p)$ of $D$ with $\card{\R_M(p)} = q$.
\end{lemma}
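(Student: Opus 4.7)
The plan is to interpret $\R_M(p')$ concretely and then reduce the lemma to a finiteness statement about sizes of residue fields. The key observation is that, for any maximal ideal $(p')$ of $D$, the set $\R_M(p') \subseteq D/p'D$ is exactly the image of $M = pD$ under the canonical projection $D \to D/p'D$. Splitting by whether $(p')=(p)$ or not: in the first case $M$ projects to $\{\bar 0\}$, so $\card{\R_M(p)} = 1$; in the second, $p$ and $p'$ are non-associate primes, so $\bar p$ is a unit in the field $D/p'D$, giving $\R_M(p') = \bar p \cdot (D/p'D) = D/p'D$ and hence $\card{\R_M(p')} = \card{D/p'D}$.

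With this in hand, the case $q = 1$ allows only the ideal $(p)$ itself (since residue fields have at least two elements), and for $q \geq 2$ the lemma reduces to the claim that only finitely many maximal ideals $(p')$ of $D$ satisfy $\card{D/p'D} = q$. To prove that claim, I would use the fact that every element of a finite field of order $q$ satisfies $x^q = x$. Thus every such $(p')$ must divide $x^q - x$ for every $x \in D$. It therefore suffices to exhibit a single $x_0 \in D$ with $x_0^q - x_0 \neq 0$: then each candidate maximal ideal $(p')$ is among the finitely many prime divisors of $x_0^q - x_0$ in the UFD $D$, and we are done.

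The only subtlety—and the one place I would slow down—is producing such an $x_0$. In characteristic zero this is immediate (for instance $x_0 = 2$ works). In positive characteristic I would argue by contradiction: were $x^q - x = 0$ for every $x \in D$, the integral domain $D$ would consist of at most $q$ roots of a fixed degree-$q$ polynomial and hence be finite, contradicting the assumption that $D$ has infinite spectrum (which forces infinitely many nonzero prime elements and thus infinitely many elements). This yields the required $x_0$, and the lemma follows. Everything else in the argument is a direct unpacking of the definitions of $\R_M(p')$ and of primality in the PID $D$.
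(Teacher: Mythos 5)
Your argument is correct and is essentially the paper's own proof written out in full: the paper likewise disposes of $q=1$ by noting that $\card{\R_M(p')}=1$ forces $(p')=M$, and for $q\ge 2$ it defers to a cited proposition whose proof is exactly your Fermat-type argument that a maximal ideal with residue cardinality $q$ contains $a^q-a$ for every $a\in D$, combined (implicitly) with your observation that $\R_M(p')=D/p'D$ whenever $(p')\neq M$. The one small caution is that the lemma as stated does not assume $\Spec(D)$ is infinite, so rather than invoking that hypothesis to produce $x_0$ in positive characteristic, you should note that if $x^q=x$ for all $x\in D$ then $D$ is finite, hence a field with only one maximal ideal, and the conclusion holds trivially.
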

\begin{proof}
$\card{\R_M(p)} = 1$ if and only if $M = (p)$. For $\card{\R_M(p)} \ge 2$,
the argument follows exactly as in the proof of
\cite[Proposition 2.1]{SF-SN-RR2019:Sets-of-lengths} with $\norm P$
replaced with $\card{\R_M(p)}$.
\end{proof}

The following lemma helps us to replace families of polynomials of
$D[x]$ with corresponding irreducibles of $K[x]$ without altering valuations.
We omit the proof, as it is identical to that of~\cite[Lemma 6.2]{hiebler2023characterizing},
a variation of~\cite[Lemma 3.3]{SF-SN-RR2019:Sets-of-lengths}.

\begin{lemma}\label{lemma:replacements}\cite[Lemma 3.3]{SF-SN-RR2019:Sets-of-lengths}, \cite[Lemma 6.2]{hiebler2023characterizing}
  Let $D$ be a principal ideal domain with infinitely many maximal ideals $M$ and
  $K$ be its quotient field. Let $I \neq \emptyset$ be a finite set
  and $f_i \in D[x]$ be monic polynomials for $i\in I$ and set $m = \sum_{i \in I}\deg(f_i)$.

  Then for every $n \in \mathbb{N}_0$, there exist monic polynomials $F_i \in D[x]$ for $i\in I$,
  such that
\begin{enumerate}
\item $\deg(F_i) = \deg(f_i)$ for all $i\in I$,
\item the polynomials $F_i$ are irreducible in $K[x]$ and
   pairwise non-associated in $K[x]$, and
\item $F_i \equiv f_i \mod p^{n + 1}D[x]$ for all maximal ideals $(p)$ of $D$
with $\card{\R_M(p)} \leq m$.
\end{enumerate}
\end{lemma}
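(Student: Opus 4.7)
The plan is to build each $F_i$ coefficient by coefficient via the Chinese Remainder Theorem, simultaneously imposing the congruence in~(iii) and an Eisenstein-type condition at an auxiliary prime that forces irreducibility in~(ii).

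By Lemma~\ref{Lemma:FMP} there are only finitely many maximal ideals $(p_1),\dots,(p_k)$ of $D$ with $\card{\R_M(p_j)}\le m$; set $N=(p_1p_2\cdots p_k)^{n+1}$. By the Chinese Remainder Theorem in $D$, clause~(iii) is equivalent to the single congruence $F_i\equiv f_i\pmod{ND[x]}$. Because $D$ has infinitely many maximal ideals, I pick pairwise distinct maximal ideals $(q_i)$ for $i\in I$, each coprime to $N$.

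For each $i\in I$ set $d_i=\deg f_i$ and write $F_i=x^{d_i}+\sum_{j=0}^{d_i-1}a_{i,j}x^j$. I would choose the coefficients $a_{i,j}\in D$ via CRT so as to simultaneously satisfy: (a) $a_{i,j}$ reduces to the $x^j$-coefficient of $f_i$ modulo $N$; (b) $q_i\mid a_{i,j}$ for $0\le j\le d_i-1$ and $a_{i,0}\equiv q_i\pmod{q_i^{2}}$; and (c) $q_\ell\nmid a_{i,0}$ for every $\ell\in I\setminus\{i\}$. The moduli $N$, $q_i^{2}$ and the $q_\ell$ (for $\ell\ne i$) are pairwise coprime in $D$, so the system is consistent. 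Clause~(i) of the lemma is then built in by the degree choice and monicity, (iii) is~(a), and (b) says $F_i$ is Eisenstein at $q_i$, which gives irreducibility in $K[x]$.

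For pairwise non-association in $K[x]$, recall that two monic polynomials are associated in $K[x]$ iff they coincide. If $F_i=F_\ell$ with $i\ne\ell$, then $a_{i,0}=a_{\ell,0}$; but (b) forces $q_i\mid a_{i,0}$ while (c) applied to $F_\ell$ at the prime $q_i$ forces $q_i\nmid a_{\ell,0}$, a contradiction. The substantive inputs are thus Lemma~\ref{Lemma:FMP}, which confines~(iii) to finitely many primes, and the standing hypothesis that $D$ has infinitely many maximal ideals, which supplies the auxiliary primes $q_i$; the main (and only) subtlety is combining irreducibility with pairwise distinctness alongside the prescribed congruences, handled in a single stroke by the extra condition~(c).
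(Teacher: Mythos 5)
Your proof is correct, and it follows essentially the same route as the argument the paper defers to (the proofs of \cite[Lemma 3.3]{SF-SN-RR2019:Sets-of-lengths} and \cite[Lemma 6.2]{hiebler2023characterizing}): confine condition~(iii) to finitely many primes via Lemma~\ref{Lemma:FMP}, then choose coefficients by the Chinese Remainder Theorem so that each $F_i$ is Eisenstein at a fresh auxiliary prime $q_i$, with the extra non-divisibility condition at the other $q_\ell$ yielding pairwise non-association of the (monic, hence equal-iff-associated) $F_i$.
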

\begin{remark}\label{Remark:replacements}
We use Lemma~\ref{lemma:replacements} to turn monic polynomials
$f_i \in D[x]$ into monic irreducible polynomials $F_i \in K[x]$ so that,
replacing any $f_i$ by the corresponding $F_i$ preserves the $p$-adic
valuations (for all prime elements $p \in D$) of the values of the product
at each element of $M$. Consequently, the fixed divisor of the original
product coincides with that of the modified product, cf.~\cite[Remark 6.4]{hiebler2023characterizing}.
\end{remark}

\section{Main Results}
In this section, we construct polynomials in $\Int(M, D)$ with prescribed
(multi)sets of lengths and show that $\Int(M, D)$ is not a transfer
Krull domain.

\begin{theorem}\label{Thm:prescribed-lengths}
  Let $D$ be a principal ideal domain such that $\Spec(D)$ is infinite
  and for each $\{0\} \neq M \in \Spec(D)$, the residue
  field $D/M$ is finite. Let $1 \le m_1 \le m_2 \le \cdots \le m_n$ be
  natural numbers.

  Then there exists a polynomial $H \in \Int(M, D)$ with
  exactly $n$ essentially different factorizations
  into irreducible polynomials in $\Int(M, D)$,
  the length of these factorizations being $m_1 + 1$, \ldots, $m_n + 1$.
\end{theorem}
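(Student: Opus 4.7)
The plan is to adapt the construction of Frisch~\cite{SF2013:Sets-of-lengths} and of Frisch, Nakato, and Rissner~\cite{SF-SN-RR2019:Sets-of-lengths} from $\Int(D)$ to $\Int(M, D)$. The polynomial $H$ will be built as a product of well-chosen primes of $D$ (each distinct from the generator $p$ of $M$) times a product of monic polynomials in $D[x]$ that are irreducible in $K[x]$; the arithmetic of the roots of these polynomials modulo the chosen primes encodes exactly the $n$ prescribed factorizations.

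First I would invoke Lemma~\ref{Lemma:FMP} to select pairwise distinct primes $p_1,\dots,p_n \in D$, all generating maximal ideals different from $M$, with $\card{\R_M(p_i)} = \card{D/p_iD}$ sufficiently large (at least $m_n + 1$, say). Since $p_i \neq p$, every residue class of $D/p_iD$ is attained by some element of $M$, so by the Chinese Remainder Theorem I can pick elements $b_1,\dots,b_N \in D$ whose residues modulo each $p_i$ realize a prescribed multiplicity pattern. The pattern is chosen so that the linear product $G(x) = \prod_k (x - b_k)$ has $\fixdiv(G) = p_1^{m_1}\cdots p_n^{m_n}$ over $M$, and moreover so that the admissible partitions of the factors of $G$ into image-primitive subproducts (whose fixed divisors together account for $p_1^{m_1}\cdots p_n^{m_n}$) are exactly $n$ in number, the $i$-th partition consisting of $m_i + 1$ blocks. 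I would then invoke Lemma~\ref{lemma:replacements} (via Remark~\ref{Remark:replacements}) to replace each $(x - b_k)$ by a monic $F_k \in D[x]$ that is irreducible in $K[x]$, pairwise non-associated with the other $F_{k'}$, and preserves every relevant fixed divisor. Set
\[ H \;=\; p_1^{m_1} \cdots p_n^{m_n} \prod_{k=1}^{N} F_k. \]

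For each $i \in \{1,\dots,n\}$, I would then exhibit the $i$-th factorization explicitly: use the $i$-th designed partition to group the $F_k$ into $m_i + 1$ blocks and divide each block by the $(p_1,\dots,p_n)$-part of its fixed divisor, absorbing in total $p_1^{m_1}\cdots p_n^{m_n}$. By construction each quotient is image-primitive, and remains irreducible in $K[x]$ since the $F_k$ are, hence is irreducible in $\Int(M, D)$ by Theorem~\ref{Thm-irr}. The $n$ factorizations are pairwise essentially different because the underlying partitions of $\{1,\dots,N\}$ differ, and they have the prescribed lengths $m_i + 1$.

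The hard part will be the converse: showing that up to being essentially the same, these are the \emph{only} factorizations of $H$. Because $\Int(M, D)$ will be shown not to be transfer Krull (Theorem~\ref{Thm:non-transfer-krull}), the enumeration cannot be reduced to a block-monoid computation and must be done directly. Given any factorization $H = h_1 \cdots h_\ell$ into irreducibles, Theorem~\ref{Thm-irr} together with the $K[x]$-irreducibility and pairwise non-associatedness of the $F_k$ force each non-constant $h_j$ to be, up to a rational scalar, a subproduct of the $F_k$'s, while each constant $h_j$ is a prime of $D$ up to units. The image-primitivity of every non-constant $h_j$ combined with the overall $p_i$-content $m_i$ of $H$ then forces the induced partition of $\{1,\dots,N\}$ to coincide with one of the $n$ partitions engineered at the CRT step, and the scalars and separated primes are thereby uniquely determined, yielding the claimed exact count and lengths.
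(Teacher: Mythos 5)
Your toolkit is the right one (Lemma~\ref{Lemma:FMP} to find suitable primes, the Chinese Remainder Theorem to place roots in prescribed residue classes, Lemma~\ref{lemma:replacements} to pass to $K[x]$-irreducible replacements, and Theorem~\ref{Thm-irr} to certify irreducibility), and your closing observation that every irreducible factor of $H$ must be a rational constant times a subproduct of the $F_k$ is correct. But the entire difficulty of the theorem lives in your sentence ``the pattern is chosen so that the admissible partitions \dots are exactly $n$ in number'': you never exhibit such a pattern, and the design you do sketch --- $n$ primes $p_1,\dots,p_n$ with $\fixdiv\left(\prod_k F_k\right)=p_1^{m_1}\cdots p_n^{m_n}$ --- is unlikely to deliver it. With $p_i$ occurring to multiplicity $m_i$ in the fixed divisor, a factorization of length $m_i+1$ forces roughly $m_i$ of the blocks to each contain a complete set of residues of $p_i$; merging two such blocks (and redistributing the constants) then typically yields further admissible partitions of intermediate cardinality, hence spurious factorizations of lengths strictly between $2$ and $m_i+1$, and nothing in your setup rules these out. (Also, as written, $H=p_1^{m_1}\cdots p_n^{m_n}\prod_k F_k$ is a product rather than a quotient; that $H$ is not image-primitive and factors further through the constant irreducibles $p_i$, so you certainly mean to divide.)

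The paper avoids all of this with a different combinatorial core, inherited from \cite{SF2013:Sets-of-lengths, SF-SN-RR2019:Sets-of-lengths}: it uses $N=\left(\sum_i m_i\right)^2-\sum_i m_i^2$ primes, each dividing the fixed divisor exactly once, indexed by the off-diagonal cells of an $m\times m$ matrix ($m=\sum_i m_i$) partitioned into blocks of sizes $m_1,\dots,m_n$, together with one extra linear factor $w(x)=x-e$ that alone covers the residue class of $0$ modulo every $p_i$. Because only $w$ covers that class, every subproduct omitting $w$ is image-primitive, so the entire constant $p_1\cdots p_N$ must accompany the unique irreducible factor containing $w$; and because the prime at cell $(k,i,h,j)$ divides the fixed divisor of a subproduct only when $F_i^{(k)}$ or $F_j^{(h)}$ is present, the set of indices that can be split off from the $w$-factor must lie in a single block, giving exactly the $n$ factorizations of lengths $m_1+1,\dots,m_n+1$. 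To salvage your approach you would need either to reproduce this matrix design or to supply an explicit multiplicity pattern together with a complete enumeration of its admissible partitions --- the step your proposal currently asserts rather than proves.
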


\begin{proof}
We explicitly construct the polynomial $H$ and describe how it factors in $\Int(M, D)$.
Let $M = (p)$, where $p \in D$ is a prime element. Suppose $n = 1$. Then
\[H = \left(\frac{x}{p}\right)^{m_1+1}\]
is an element of $\Int(M, D)$ with exactly one factorization, and this factorization
has length $m_1+1$.

From now, $n > 1$ and by a complete set of residues we
mean a complete set of residues with respect to $M$. We first construct $H \in \Int(M, D)$.

Let $N = \left(\sum_{i=1}^n m_i\right)^2 - \sum_{i=1}^n m_i^2$ and
$\p = \{p_1, p_2, \ldots, p_N\}$ be distinct prime elements of
$D$ with $\card{\R_M(p_i)} > 2$ for each $1 \leq i \leq N$.
Note that by Lemma~\ref{Lemma:FMP}, the set $\p$ exists.
Let $\card{\R_M(p_i)} = t_i$ and without loss of generality,
assume $t_1 \leq t_2 \leq \cdots \leq t_N$.
We denote by $\Q$ the set of prime elements $q$ of $D$ with
$\card{\R_M(q)} \leq t_N$ but $q \not\in \p$.

For each $p_i \in \p$,
we use the Chinese Remainder Theorem to construct a set $T_{p_i}$ which contains exactly
one complete set of residues of $p_i$ but does not
contain a complete set of residues of $q \in \Q$ or $p_j$ for $j \neq i$.
To achieve this, for each $1 \leq i \leq N$, let
$a_{i, 1}, a_{i, 2}, \ldots, a_{i, t_i}$ be a complete set of residues of
$p_i$ such that $a_{i, 1} = 0$ and $a_{i, 2} = p$ for all $i$.

For each $1 \leq i \leq N$, we construct
$T_{p_i} = \{c_{i, 1}, c_{i, 2}, \ldots, c_{i, t_i}\} \subseteq D$
such that
  \begin{enumerate}
  \item For all $1 \leq j \leq {t_i}$, $c_{i, j} \equiv a_{i, j} \Mod{p_i}$
  and $c_{i, j} \not \equiv a_{i, j} \Mod{p_i^2}.$
  \item For all $1 \leq j \leq {t_i}$, $c_{i, j} \not\equiv a_{k, 1} \Mod{p_k}$
  for $i \neq k$.
  \item For all $1 \leq j \leq {t_i}$, $c_{i, j} \not\equiv a_{k, 2} \Mod{p_k}$ for $i \neq k$.
    \item For all $1 \leq i \leq N$ and all $1 \leq j \leq {t_i}$, $c_{i, j} \equiv 1 \Mod{q}$
    for all $q \in \Q$.
  \end{enumerate}
In addition, we find a nonzero element $e \in D$ such that for all $1 \leq i \leq {N}$,
 \begin{enumerate}
  \item $e \equiv a_{i, 1} \Mod{p_i}$ and $e \not \equiv a_{i, 1} \Mod{p_i^2}.$
    \item $e \equiv 1 \Mod{q}$ for all $q \in \Q$.
  \end{enumerate}
Let $w(x)= x - e$ and for each $1 \leq i \leq N$, set
\[g_i = (x - c_{i, 2})(x - c_{i, 3} ) \cdots (x - c_{i, t_i}).\]

For $1 \leq i \leq N$, we use Lemma~\ref{lemma:replacements} to
construct monic polynomials $G_i \in D[x]$ that are irreducible in $K[x]$,
pairwise non-associated in $K[x]$, with $\deg(G_i)= \deg(g_i)$, and such that,
for every selection
of polynomials from among $w(x)$ and $g_i$, the
product of the polynomials has the same relevant properties (with respect to all the
primes $p$ with $\card{\R_M(p)} \leq \deg\left(\prod^{N}_{i = 1}g_i\right))$ as the modified
product in which each $g_i$ has been replaced by
$G_i$ (see Remark~\ref{Remark:replacements}).

We set
\[ \F = \{G_i \mid 1 \leq i \leq N\}\]
and assign indices to the elements of $\F$ as follows
\begin{equation*}
  \F = \{G_{(k,i,h,j)} \mid
  1\le k, h \le n, k\ne h, 1\le i\le m_k, 1\le j\le m_h\},
\end{equation*}
in particular, like the authors in \cite{SF2013:Sets-of-lengths, SF-SN-RR2019:Sets-of-lengths},
we arrange the elements of $\F$ in an $m \times m$ matrix $B$,
where $m = \sum_{i=1}^n m_i$, with the rows and columns of $B$ partitioned into
$n$ blocks of sizes $m_1, \ldots, m_n$.

Now $G_{(k,i,h,j)}$ designates the entry in the $i$-th row of the
$k$-th block of rows and in the $j$-th column of the $h$-th block of
columns. Since no element of $\F$ has row and column index in the
same block, the positions in the blocks of $B$ with block
sizes $m_1,\ldots, m_n$ are left empty, see
\cite[Fig.~1]{SF-SN-RR2019:Sets-of-lengths}.

Following the notation in \cite{SF-SN-RR2019:Sets-of-lengths},
let $I_k = \{(k,i) \mid 1\le i \le m_k\}$ for $1\le k\le n$ and set
\begin{equation*}
  I = \bigcup_{k=1}^n I_k.
\end{equation*}
It follows that
\begin{equation*}
  I = \{(k,i)\mid 1\le k \le n, 1\le i \le m_k\}
\end{equation*} is the set of all possible row indices, or,
equivalently, column indices.

For $(k,i)\in I_k$, let $B[k,i]$ be
the set of all elements $G \in \F$ which are either in row or in
column $(k,i)$ of $B$, see \cite[Fig.~1]{SF-SN-RR2019:Sets-of-lengths}.
In particular,
\begin{equation*}
  B[k,i] = \{G_{(k,i,h,j)} \mid (h,j)\in
  I\setminus I_k\}\cup \{G_{(h,j,k,i)} \mid (h,j)\in I\setminus I_k\}
\end{equation*}

Now, for $(k,i) \in I$, let
\begin{equation*}
 f_i^{(k)}(x) = \prod_{G \in B[k,i]} G.
\end{equation*}

For each $(k,i) \in I$, we again use Lemma~\ref{lemma:replacements} to construct
monic polynomials $F_i^{(k)}(x) \in D[x]$ which are irreducible in $K[x]$, pairwise
non-associated in $K[x]$, with $\deg(F_i^{(k)})= \deg(f_i^{(k)})$,
and such that, for every selection
of polynomials from among $w(x)$ and $f_i^{(k)}$ for $(k,i) \in I$, the
product of the polynomials has the same fixed divisor as the modified
product in which each $f_i^{(k)}$ has been replaced by
$F_i^{(k)}$, cf.~Remark~\ref{Remark:replacements}.

We set
\begin{equation*}
\mathcal{H}(x) = w(x)\! \prod_{(k,i) \in I}\!\! F_i^{(k)}(x)
\quad\textrm{and}\quad
H(x) = \frac{\mathcal{H}(x)}{p_1 \cdots p_N}.
\end{equation*}

Next, we show that $\fixdiv(\mathcal{H}(x)) = p_1 \cdots p_N$; this in particular
implies that $H(x)\in \Int(M, D)$ and $\fixdiv(H(x)) = 1$.
Now, by construction,
\begin{equation}\label{eq:fixdiv=*pi}
 \fixdiv(\mathcal{H}(x)) =
 \fixdiv\left(w(x) \prod_{(k,i) \in I} F_i^{(k)}(x)\right) =
 \fixdiv\left(w(x) \prod_{(k,i)\in I}f_i^{(k)}\right)
\end{equation}
\begin{equation*}
 = \fixdiv\left((x-e)\prod_{1 \leq i\leq N}G_i^2\right) = \fixdiv\left((x-e)\prod_{1 \leq i\leq N}g_i^2\right).
\end{equation*}

Furthermore, for each $1 \leq i \leq N$, the following hold:
\begin{enumerate}
\item $v_{p_i}(G_i(a_{i, 1})) = 0 \text{ and }
v_{p_i}(G_j(a_{i, 1}))) = v_{p_i}(G_j(a_{i, 2})) = 0 \text{ for } j \neq i.$
\item $v_{p_i}(G_i(a_{i, j})) = 1$ for all $2 \leq j \leq t_i$.
\item $v_{p_i}(w(a_{i, 1})) = 1$.
\item 0 is not a root of $(x-e)\prod_{1 \leq i\leq N}G_i^2$
modulo $q$ for all primes $q \in \Q$. More generally, no prime $\mathfrak{p}$ with
$\card{\R_M(\mathfrak{p})} \leq \deg\left(\prod^{N}_{i = 1}G^2_i\right)$ divides
 $\fixdiv(\mathcal{H}(x))$.
\end{enumerate}
It follows that $\fixdiv(\mathcal{H}(x))= p_1 \cdots p_N$.

Lastly, we prove that the essentially different factorization of $H(x)$
into irreducibles in $\Int(M, D)$ are given by
\begin{equation}\label{eq:facts-of-H}
  H(x) =  F_1^{(h)}(x)\cdots F_{m_h}^{(h)}(x) \cdot \frac{w(x)\prod_{(k,i)\in
      I\setminus I_h} F_i^{(k)}(x)}{p_1 \cdots p_N}
\end{equation}
where $1 \le h \le n$.

Now since by the construction of $g_i$ and $G_i$,
$$\fixdiv\left((x-e)\prod_{1 \leq i\leq N}g_i^2\right) = \fixdiv\left((x-e)\prod_{1 \leq i\leq N}g_i\right) = \fixdiv\left((x-e)\prod_{1 \leq i\leq N}G_i\right),$$ it follows
from Equation \eqref{eq:fixdiv=*pi} that
\begin{equation*}
  \fixdiv\left(w(x) \prod_{(k,i) \in I} F_i^{(k)}(x)\right) =
   \fixdiv\left((x-e)\prod_{1 \leq i\leq N}G_i\right) = \fixdiv\left(w(x)\prod_{(k,i)\in
      I\setminus I_h} f_i^{(k)}(x)\right)
\end{equation*}
for some $1 \le h \le n$. Hence by Lemma~\ref{lemma:replacements}, we have
\begin{equation*}
  \fixdiv\left(w(x) \prod_{(k,i) \in I} F_i^{(k)}(x)\right) = \fixdiv\left(w(x)\prod_{(k,i)\in
      I\setminus I_h} F_i^{(k)}(x)\right)
\end{equation*}
for some $1 \le h \le n$. Furthermore, the polynomial $\dfrac{w(x)\prod_{(k,i)\in
      I\setminus I_h} F_i^{(k)}(x)}{p_1 \cdots p_N}$ is irreducible in $\Int(M, D)$, by Theorem~\ref{Thm-irr}.
Therefore the factorizations in Equation~\eqref{eq:facts-of-H} are the
$n$ essentially different factorizations of $H(x)$.
\end{proof}

In the next theorem, we fix an irreducible element $\frac{x}{p}$
and construct another irreducible element $H \in \Int(M, D)$
such that the lengths of factorizations of $\frac{x}{p}\cdot H$ are not bounded.
This implies by Remark~\ref{Remark:notTK} that there does not exist a transfer
homomorphism from the multiplicative monoid  $\Int(M, D) \setminus \{0\}$ to a
block monoid. In particular, $\Int(M, D)$ is not a transfer Krull Domain.

\begin{theorem}\label{Thm:non-transfer-krull}
  Let $D$ be a principal ideal domain such that $\Spec(D)$ is infinite
  and for each $\{0\} \neq M \in \Spec(D)$, the residue
  field $D/M$ is finite.
  Then for every $n \geq 1$ there exist irreducible elements
  $H, G_1, \ldots, G_{n + 1}$ in $\Int(M, D)$ such that
  \[\frac{x}{p}H = G_1 \cdots G_{n+1},\]
  where $p$ is the generator of $M$.
\end{theorem}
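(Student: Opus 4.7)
The plan is to adapt the matrix-style construction in the proof of Theorem~\ref{Thm:prescribed-lengths} to produce two essentially different factorisations of a single polynomial in $\Int(M, D)$: one of length $2$ with $x/p$ as one of its factors, and one of length $n+1$. First, using Lemma~\ref{Lemma:FMP}, I pick pairwise distinct prime elements $p_1, \ldots, p_N$ of $D$, all different from $p$, with $\card{\R_M(p_i)} > 2$, and fix for each $p_i$ a complete set of residues $\{a_{i,1}=0, a_{i,2}, \ldots, a_{i,t_i}\}$ with respect to $M$. Using the Chinese Remainder Theorem I construct elements $c_{i,j} \in D$ (for $1 \le i \le N$, $2 \le j \le t_i$) satisfying congruence conditions analogous to those in the proof of Theorem~\ref{Thm:prescribed-lengths}; crucially, I require that every $c_{i,j}$ is a unit modulo $p$, so that only the factor $x$ contributes $p$-divisibility to the fixed divisor of the final polynomial over $M$.

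Next, I form $g_i(x) = \prod_{j=2}^{t_i}(x - c_{i,j})$ and invoke Lemma~\ref{lemma:replacements} to replace each $g_i$ (and subsequent products) by monic polynomials in $D[x]$ that are irreducible and pairwise non-associated in $K[x]$, while preserving the relevant fixed-divisor structure. The resulting family is distributed among the entries of a matrix with row/column blocks of sizes $1$ and $n$, and the polynomials $F^{(k)}_i(x)$ are defined exactly as in the proof of Theorem~\ref{Thm:prescribed-lengths}. In place of the polynomial $w$ from that proof, the factor $x$ is used to ``unlock'' the $p_i$-divisibility at the residue class $0$ of each $p_i$, and simultaneously to contribute the prime $p$ to the total fixed divisor. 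Setting
\[
\mathcal{H}(x) = x \cdot \prod_{(k, i) \in I} F^{(k)}_i(x),
\]
a computation paralleling the one in Theorem~\ref{Thm:prescribed-lengths} yields $\fixdiv(\mathcal{H}) = p \cdot p_1 \cdots p_N$, and the two row-block vs.\ column-block groupings provide two essentially different factorisations of $\mathcal{H}/(p \cdot p_1 \cdots p_N) \in \Int(M, D)$ into irreducibles: one of the form $(x/p) \cdot H$ (length $2$) and one of the form $G_1 \cdots G_{n+1}$ (length $n+1$), with one of the $G_i$ absorbing the factor $x$ together with its matrix neighbours.

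The main obstacle is verifying, via Theorem~\ref{Thm-irr}, that $H$ is itself irreducible in $\Int(M, D)$: the numerator of $H$ is a product of irreducibles in $D[x]$, and one must rule out every nontrivial pairing that would split $H$ along the primes $p_i$. This is handled by the ``entanglement'' built into the matrix construction: the CRT conditions on the $c_{i,j}$'s guarantee that each prime $p_i$ enters the fixed divisor of $H$ only through the joint contribution of the matrix entries in its row and in its column, so that for any proper subset of those entries the corresponding $p_i$-divisibility is destroyed and the Chapman--McClain inequality in Theorem~\ref{Thm-irr} is violated. Irreducibility of each $G_i$ follows more directly from the fact that its numerator is a product of irreducibles in $K[x]$ whose fixed divisor is exactly the prime(s) in its denominator (including $p$ for the $G_i$ that contains the factor $x$). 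Finally, the two factorisations of $(x/p) H$ are essentially different by comparison of lengths ($2$ versus $n+1$), which completes the proof.
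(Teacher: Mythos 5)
Your overall strategy---build an irreducible $H$ whose numerator covers the residue classes of several auxiliary primes $p_i$ in an entangled way, and use the extra factor $x$ to take over the coverage of the class $\bar{0}$ of each $p_i$ so that $\frac{x}{p}H$ re-splits---is the right one and is, in spirit, what the paper does. But the concrete construction you propose does not deliver the key factorization $\frac{x}{p}\cdot H$. In the matrix construction of Theorem~\ref{Thm:prescribed-lengths}, the residue class $\bar{0}$ of each $p_i$ is covered \emph{only} by the distinguished factor ($w$, or in your version $x$): by the analogue of condition (ii) on the $c_{i,j}$ (and since $g_i$ omits the factor $x-c_{i,1}$), no $g_i$, hence no $G_i$, vanishes on the class $\bar{0}$ of any $p_k$. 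Consequently $p_i \nmid \fixdiv\bigl(\prod_{(k,i)\in I}F_i^{(k)}\bigr)$ for every $i$, so $\prod_{(k,i)\in I}F_i^{(k)}/(p_1\cdots p_N)$ is not even an element of $\Int(M,D)$, and the grouping you call ``$(x/p)\cdot H$'' is not a factorization in $\Int(M,D)$. What the block construction with block sizes $1$ and $n$ actually yields is $F^{(1)}_1\cdot \frac{x\prod_{(k,i)\ne(1,1)}F_i^{(k)}}{p\,p_1\cdots p_N}$: the factor $x$ is always bundled into the large denominator-carrying irreducible and never stands alone as $x/p$. This is the internal tension in your write-up: $x$ cannot be the unique factor that ``unlocks the $p_i$-divisibility at the residue class $0$'' and at the same time split off as $x/p$ leaving a quotient still divisible by $p_1\cdots p_N$.

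The repair is to let the numerator of $H$ itself cover the class $\bar{0}$ of each $p_i$, via a separate linear factor per prime; this is what the paper does, with only $n$ auxiliary primes and no matrix. Choose $a_{i,1}$ with $a_{i,1}\equiv 0 \Mod{p_i}$ (and $p\nmid a_{i,1}$), let $G$ be a single monic polynomial, irreducible in $K[x]$ and obtained via Lemma~\ref{lemma:replacements}, covering all the nonzero classes of $\R_M(p_i)$ for every $i$, and set $H = G\cdot\prod_{i=1}^{n}(x-a_{i,1})/(p_1\cdots p_n)$. Then $H$ is irreducible by Theorem~\ref{Thm-irr}, since no proper sub-product of the numerator covers all of $\R_M(p_i)$ for each relevant $i$; while in $\frac{x}{p}H$ the single factor $x$ covers the class $\bar{0}$ of all the $p_i$ simultaneously, so $\frac{xG}{p\,p_1\cdots p_n}$ is irreducible in $\Int(M,D)$ and the $n$ image-primitive linear factors $x-a_{i,1}$ split off, giving length $n+1$. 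Note finally that the statement only asks for the existence of one factorization of $\frac{x}{p}H$ of length $n+1$, not for exact control of all factorizations, so the full matrix apparatus is unnecessary here even once its defect is fixed.
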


\begin{proof}
By a complete set of residues we mean a complete set of residues with respect to $M$.
Now let $\p = \{p_1, p_2, \ldots, p_n\}$ be distinct prime elements of
$D$ with $\card{\R_M(p_i)} > 1$ for each $1 \leq i \leq n$. Let
$\card{\R_M(p_i)} = t_i$ and without loss of generality,
assume $t_1 \leq t_2 \leq \cdots \leq t_n$.
Let $\Q$ be the set of prime elements $q$ of $D$ with
$\card{\R_M(q)} \leq t_n + n$ but $q \not\in \p$.

We use the Chinese Remainder Theorem to construct a set $T$ which contains exactly
one complete set of residues of $p_i$ for all $1 \leq i \leq n$, but does not
contain a complete set of residues of any $q \in \Q$.
To achieve this, for each $1 \leq i \leq n$, let
$a_{i, 1}, a_{i, 2}, \ldots, a_{i, t_i}$ be a complete set of residues of
$p_i$ such that $a_{i, 1} \equiv 0 \Mod{p_i}$ for all $i$.

Let $T = \{r_1, r_2, \ldots, r_{t_n}\} \subset D$ with the following properties:
  \begin{enumerate}
  \item For all $1 \leq i \leq n$, $r_1 \equiv 0 \Mod{p_i}$.
  \item For all $1 \leq i \leq n$ and all $2 \leq j \leq {t_i}$, $r_j \equiv a_{i, j} \Mod{p_i}$
  and $r_j \not \equiv a_{i, j} \Mod{p_i^2}.$
  \item For all $1 \leq i \leq n$ and all $j > {t_i}$, $r_j \equiv p \Mod{p_i}$.
    \item For all $1 \leq j \leq n$, $r_j \equiv 1 \Mod{q}$
    for all $q \in \Q$.
  \end{enumerate}
Let $g = (x - r_2)(x - r_3) \cdots (x - r_n)$. Using Lemma \ref{lemma:replacements},
we construct a monic polynomial $G \in D[x]$
that is irreducible in $K[x]$, with $\deg(G)= \deg(g)$, and such that,
for every selection of polynomials from among $x$, $g$, and $x - a_{i, 1}$ for $1 \leq i \leq n$, the product of the polynomials has the same fixed divisor as the modified product in
which $g$ has been replaced by $G$ (see Remark~\ref{Remark:replacements}). Let
\[H = \frac{G\cdot(x -  a_{1, 1})(x -  a_{2, 1}) \cdots (x -  a_{n, 1})}{p_1p_2 \cdots p_n}.\]
It follows by Theorem~\ref{Thm-irr} that $H$ is irreducible in $\Int(M, D)$. Furthermore,
\[\frac{x}{p}H = \frac{xG}{pp_1p_2 \cdots p_n} \cdot (x -  a_{1, 1})(x -  a_{2, 1}) \cdots (x -  a_{n, 1})\]
and $\frac{xG}{pp_1p_2 \cdots p_n}$ and each $x -  a_{i, 1}$ are irreducible in $\Int(M, D)$. Therefore,
the assertion follows.
\end{proof}

\begin{corollary}
Let $D$ be a principal ideal domain such that $\Spec(D)$ is infinite
and for each $\{0\} \neq M \in \Spec(D)$, the residue field $D/M$ is finite.
Then $\Int(M, D)$ is not a transfer Krull domain.
\end{corollary}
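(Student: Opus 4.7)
The plan is to derive this immediately from Theorem~\ref{Thm:non-transfer-krull} together with Remark~\ref{Remark:notTK}. I would argue by contradiction: suppose that $\Int(M,D)$ is a transfer Krull domain, so there exists a transfer homomorphism $\theta \colon \Int(M,D)\setminus\{0\} \longrightarrow \mathcal{B}(G_0)$ for some subset $G_0$ of an abelian group $G$. The goal is to exhibit an irreducible $U \in \Int(M,D)$ such that $\max L(UV)$ is unbounded as $V$ ranges over irreducibles, which would contradict the general property of transfer Krull monoids recorded in Remark~\ref{Remark:notTK}.

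Unpacking that remark: since transfer homomorphisms send irreducibles to irreducibles, if $U, V \in \Int(M,D)$ are irreducible then $\theta(U)$ and $\theta(V)$ are irreducible in $\mathcal{B}(G_0)$; Lemma~\ref{lemma:blockMonoid}(iii) then bounds $\max L(\theta(U)\theta(V))$ by a constant $C(U)$ depending only on $\card{\theta(U)}$. Because transfer homomorphisms preserve sets of lengths, this yields $\max L(UV) \leq C(U)$ for every irreducible $V \in \Int(M,D)$. So the task reduces to producing a single irreducible $U$ that violates this uniform bound.

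The natural choice is $M=(p)$ and $U = \frac{x}{p}$, which is irreducible in $\Int(M,D)$ by Theorem~\ref{Thm-irr}, since $x$ is irreducible in $D[x]$ and $\fixdiv(M,x) = p$. Theorem~\ref{Thm:non-transfer-krull} then supplies, for each $n \geq 1$, an irreducible $H_n \in \Int(M,D)$ together with irreducibles $G_1, \ldots, G_{n+1}$ such that $\tfrac{x}{p}H_n = G_1 \cdots G_{n+1}$; hence $\max L(UH_n) \geq n+1$ for every $n$, contradicting $C(U) < \infty$. Therefore no transfer homomorphism $\theta$ to a block monoid can exist, and $\Int(M,D)$ is not a transfer Krull domain.

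I do not anticipate any real obstacle in this step: the substantive work of producing the unbounded-length phenomenon was already carried out in Theorem~\ref{Thm:non-transfer-krull}, and the corollary is just its translation into the transfer-Krull vocabulary via the standard boundedness statement from Lemma~\ref{lemma:blockMonoid}.
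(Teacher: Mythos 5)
Your proposal is correct and follows exactly the route the paper intends: the corollary is a direct consequence of Theorem~\ref{Thm:non-transfer-krull} combined with Remark~\ref{Remark:notTK}, since the unbounded factorization lengths of $\frac{x}{p}\cdot H_n$ contradict the bound from Lemma~\ref{lemma:blockMonoid} that any transfer homomorphism to a block monoid would impose. No gaps.
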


\section*{Acknowledgement}
This research was initiated as part of the Inaugural African Women in Algebra
Workshop, which took place at Kabale University, Uganda, in 2024, organized by
the African Women in Algebra network. The authors gratefully acknowledge Kabale
University for its hospitality and thank Prof.~Sophie Frisch and the following
organizations for their generous financial support, which made the workshop possible:
the African Women in Mathematics Association, the
Centre International de Math\'{e}matiques Pures et Appliqu\'{e}es, the European Mathematical Society-Committee
for Developing Countries, the International Mathematical Union-Commission for Developing
Countries, the International Mathematical Union-Committee for Women in
Mathematics, the International Science Programme, Kabale University, the London
Mathematical Society, and the University of Vienna.

The authors are also grateful to Dr.~Roswitha Rissner for her helpful
comments on the exposition of the manuscript.
\bibliographystyle{plain}
\bibliography{bibliographyAWA}

\end{document}